\newcommand{\CM}{Cohen-Macaulay}
\newcommand{\n}{\mathfrak{n} }
\newcommand{\m}{\mathfrak{m} }
\newcommand{\q}{\mathfrak{q} }
\newcommand{\C}{\mathcal{C} }
\newcommand{\Z}{\mathbb{Z} }
\newcommand{\rt}{\rightarrow}
\newcommand{\bx}{\mathbf{x} }
\newcommand{\Xb}{\mathbf{X}_\bullet}
\newcommand{\Zb}{\mathbf{Z}_\bullet}
\newcommand{\Yb}{\mathbf{Y}_\bullet}
\newcommand{\Kb}{\mathbf{K}_\bullet}
\newcommand{\Supp}{\operatorname{Supp}}
\newcommand{\grade}{\operatorname{grade}}
\newcommand{\height}{\operatorname{height}}
\newcommand{\ann}{\operatorname{ann}}
\newcommand{\proj}{\operatorname{proj}}
\newcommand{\projdim}{\operatorname{projdim}}
\newcommand{\cone}{\operatorname{cone}}
\newcommand{\Tor}{\operatorname{Tor}}
\newcommand{\I}{\operatorname{Iso}}
\newcommand{\Hom}{\operatorname{Hom}}
\newcommand{\Ext}{\operatorname{Ext}}
\theoremstyle{plain}
\newtheorem{theorem}{Theorem}[section]
\newtheorem{corollary}[theorem]{Corollary}
\newtheorem{lemma}[theorem]{Lemma}
\theoremstyle{definition}
\theoremstyle{remark}
\begin{document}

\title[Derived functors]{Derived functors and Hilbert polynomials over regular local rings}
\author{Tony~J.~Puthenpurakal}
\date{\today}
\address{Department of Mathematics, IIT Bombay, Powai, Mumbai 400 076}

\email{tputhen@math.iitb.ac.in}
\subjclass{Primary 13D02, 13D07  ; Secondary 13A30, 13D40, 13D09}
\keywords{ Torsion and extension functors, bounded homotopy category of projectives }

 \begin{abstract}
Let $(A,\mathfrak{m})$ be a regular local ring of dimension $d \geq 1$, $I$ an $\mathfrak{m}$-primary ideal. Let $N$ be a non-zero finitely generated $A$-module. Consider the functions
\[
t^I(N, n) = \sum_{i = 0}^{ d}\ell(\text{Tor}^A_i(N, A/I^n)) \ \text{and}\  e^I(N, n) = \sum_{i = 0}^{ d}\ell(\text{Ext}_A^i(N, A/I^n))
\]
of polynomial type and let their degrees be $t^I(N) $ and $e^I(N)$. We prove
that $t^I(N) = e^I(N) =  \max\{ \dim N, d -1 \}$.
\end{abstract}
 \maketitle
\section{introduction}
In this paper all rings considered are commutative, Noetherian, local with unity and all modules considered will be finitely generated. Let $(A, \m)$ be a local ring of dimension $d \geq 1$, $I$ an $\m$-primary  ideal in $A$ and let $L$ be an $A$-module. If $T$ is an $A$-module of finite length then we denote by $\ell(T)$ its length. The Hilbert-Samuel polynomial $n \mapsto \ell(L/I^nL)$ of $L$ with respect to $I$
is well-studied. It is known that it is of polynomial type and of degree $\dim L$. Considerably less is known of the function $n \mapsto \ell(\Tor^A_i(L, A/I^n))$ for $i \geq 1$. It is known that this function is of polynomial type and of degree $\leq d - 1$. There are some results which show under certain conditions the maximal degree is attained, see \cite{P}, \cite{IP} and \cite{KT}. However this function can also be identically zero, see \cite[Remark 20]{P}. Similarly not much is known of the function  $n \mapsto \ell(\Ext^i_A(L, A/I^n))$ for $i \geq 1$.  It is known that this function is of polynomial type and  of degree $\leq d - 1$.
There are some results which show under certain conditions the maximal degree is attained, see \cite{KP}, \cite{CKST}. Even less is known of the functions $n \mapsto \ell(\Tor^A_i(L, M/I^nM))$ and $n \mapsto \ell(\Ext_A^i(L, M/I^nM))$ where $M$ is an $A$-module.

Perhaps the first case to consider for these functions is when $A$ is regular. In this case $\projdim N$ is finite for any $A$-module $N$. Surprisingly we found out that the functions
\[
t^I(N, n) = \sum_{i = 0}^{ d}\ell(\Tor^A_i(N, A/I^n)) \ \text{and}\  e^I(N, n) = \sum_{i = 0}^{ d}\ell(\Ext_A^i(N, A/I^n))
\]
are \emph{easier} to tackle. One can then work with $K^b(\proj A)$, the homotopy category of bounded complexes of projective $A$-modules,  which is the bounded derived category of $A$.
More generally  let $(A,\m)$ be a local ring (not necessarily regular). Let $\Xb \colon \Xb^{-1} \rt \Xb^0 \rt \Xb^{1}$ be a complex of $A$-modules. In \cite[Proposition 3]{T}  it is shown that if $\ell(H^0(\Xb\otimes M/I^nM))$ has finite length for all $n \geq 1$ then the  function $n \rt \ell(H^0(\Xb\otimes M/I^nM))$ is of polynomial type. The precise degree of this polynomial is difficult to determine (a general upper bound for the degree is given in \cite[Proposition 3]{T}).

\s \label{setup} In this paper we prove a surprising  result. Let $(A, \m)$ be a local ring and let  $K^b(\proj A)$ be the homotopy category of bounded complexes of projective $A$-modules, Let $K^b_f(\proj A)$
denote the homotopy category of bounded complexes of projective $A$-modules with finite length cohomology. Let $\Xb \in K^b_f(\proj A)$. We note that for any $A$-module $M$ and an ideal $I$ we have
$\ell(H^i(\Xb \otimes M/I^nM))$ has finite length for all $n \geq 1$ and for all $i \in \Z$. The main point of this paper is that it is better to look at the function
\[
\psi_{\Xb}^{M, I}(n) = \sum_{i \in \Z}\ell(H^i(\Xb \otimes M/I^nM)), \quad  \text{for $n \geq 1$}.
\]
We know that $\psi_{\Xb}^{M, I}(n)$ is of polynomial type say of degree $r_I^M(\Xb)$.
The main result of this paper is
\begin{theorem}\label{main}[with hypotheses as in \ref{setup}] Assume $M \neq 0$ and $I \neq A$. Then there exists a non-negative integer $r_I^M$ depending only on $I$ and $M$ such if $\Xb \in K^b_f(\proj A)$ is non-zero then $r_I^M(\Xb) = r_I^M$.
\end{theorem}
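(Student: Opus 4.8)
My plan is to reduce the claim to a statement about thick subcategories of $K^b_f(\proj A)$. Write $N_n = M/I^nM$ and $\psi_\Xb = \psi_\Xb^{M,I}$. First I would record three elementary properties of the degree $r_I^M(\Xb)$ of the polynomial-type function $\psi_\Xb$. (i) $\psi_{\Xb[1]} = \psi_\Xb$, so $r_I^M(\Xb[1]) = r_I^M(\Xb)$. (ii) $\psi_{\Xb\oplus\Yb} = \psi_\Xb + \psi_\Yb$, so $r_I^M(\Xb\oplus\Yb) = \max\{r_I^M(\Xb),r_I^M(\Yb)\}$; in particular $r_I^M$ of a direct summand of $\Xb$ is at most $r_I^M(\Xb)$. (iii) If $\Xb \rt \Yb \rt \Zb \rt \Xb[1]$ is a distinguished triangle in $K^b(\proj A)$ with $\Xb,\Yb \in K^b_f(\proj A)$, then $\Zb \in K^b_f(\proj A)$ and $r_I^M(\Zb) \le \max\{r_I^M(\Xb),r_I^M(\Yb)\}$. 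Only (iii) needs an argument: replacing the triangle by the mapping cone of a chain map $f\colon \Xb\rt\Yb$, one tensors the degreewise split exact sequence $0 \rt \Yb \rt \cone(f) \rt \Xb[1] \rt 0$ with $N_n$ (still exact) and reads off the long exact cohomology sequence; it shows simultaneously that $H^i(\Zb\otimes N_n)$ has finite length and that $\ell(H^i(\Zb\otimes N_n)) \le \ell(H^i(\Yb\otimes N_n)) + \ell(H^{i+1}(\Xb\otimes N_n))$, whence $\psi_\Zb \le \psi_\Yb + \psi_\Xb$ pointwise and the degree inequality follows.

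Next I would fix a non-zero $\Xb_0 \in K^b_f(\proj A)$, set $c = r_I^M(\Xb_0)$, and consider
\[
\C = \{\, \Yb \in K^b_f(\proj A) \ :\ \Yb = 0 \ \text{or}\ r_I^M(\Yb) \le c \,\}.
\]
Properties (i)--(iii) say exactly that $\C$ is closed under shifts, finite direct sums, direct summands, and the ``two out of three'' rule for distinguished triangles of $K^b(\proj A)$, so $\C$ is a thick subcategory of $K^b(\proj A)$ contained in $K^b_f(\proj A)$ and containing the non-zero object $\Xb_0$. The crux is then the assertion that $K^b_f(\proj A)$ has no thick subcategory besides $0$ and itself --- equivalently, that every non-zero object of $K^b_f(\proj A)$ generates it as a thick subcategory. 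Granting this, $\C = K^b_f(\proj A)$, hence $r_I^M(\Yb) \le r_I^M(\Xb_0)$ for every non-zero $\Yb$; since $\Xb_0$ was arbitrary, interchanging $\Xb_0$ and $\Yb$ gives equality, and the common value $r_I^M$ manifestly depends only on $I$ and $M$. That $r_I^M \ge 0$ I would see by taking $\Xb$ to be the Koszul complex $K$ on a system of parameters $\underline{x}$ (which lies in $K^b_f(\proj A)$ since its cohomology is killed by the $\m$-primary ideal $(\underline{x})$): here $H^0(K\otimes N_n) \cong N_n/\underline{x}N_n \cong M/(I^nM + \underline{x}M)$ surjects onto $M/\m M \ne 0$, so $\psi_K(n) \ge 1$ for all $n$ and $r_I^M = r_I^M(K) \ge 0$.

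The single non-formal ingredient is the ``simplicity'' of $K^b_f(\proj A)$, and that is where the work lies. It is a consequence of the Hopkins--Neeman classification of thick subcategories of $\operatorname{perf}(A) = K^b(\proj A)$ by specialization-closed subsets of $\operatorname{Spec} A$, via $\Yb \mapsto \bigcup_i \Supp H^i(\Yb)$: a non-zero object of $K^b_f(\proj A)$ has support the closed point $\{\m\}$, and the only specialization-closed subsets of $\operatorname{Spec} A$ contained in $\{\m\}$ are $\emptyset$ and $\{\m\}$. For a self-contained proof I would instead argue directly that the Koszul complex $K$ on a system of parameters generates $K^b_f(\proj A)$ and that, conversely, $K$ lies in the thick subcategory generated by an arbitrary non-zero $\Xb \in K^b_f(\proj A)$; the second point --- building $K$ out of a given finite-length perfect complex by finitely many cones, shifts and summands --- is the main obstacle I expect.
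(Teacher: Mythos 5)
Your proposal is correct and takes essentially the same route as the paper: subadditivity of $\psi$ along mapping-cone triangles (via the degreewise split exact sequence $0 \rt \Yb \rt \cone(f) \rt \Xb[1] \rt 0$ tensored with $M/I^nM$) yields a thick subcategory of $K^b_f(\proj A)$ cut out by a degree condition, and the key non-formal input---that $K^b_f(\proj A)$ has no proper non-zero thick subcategories, via Hopkins--Neeman, which the paper cites as \cite[Lemma 1.2]{NK}---forces the degree to be the same for all non-zero objects. The only cosmetic differences are that the paper works with the kernel of the normalized leading-coefficient function $\eta(\Yb) = \lim_n c!\,\psi_{\Yb}^{M,I}(n)/n^c$ at the maximal degree $c$ (so it needs the a priori bound $r_I^M(\Xb) \leq \dim A$, which your symmetric sublevel-set argument avoids), and it obtains $r_I^M \geq 0$ from Lemma \ref{non-zero} rather than from the Koszul complex.
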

The essential reason why this happens is because $K^b_f(\proj A)$ has \emph{no} proper thick subcategories.
\s Thus to determine $r_I^M(X)$ it suffices to compute it for a single non-zero complex $\Xb$ in $K^b_f(\proj A)$. As a consequence of Theorem \ref{main} we show
\begin{theorem}
\label{max-dim}[with hypotheses as in Theorem \ref{main}] If $\dim M > 0$ and $I$ is $\m$-primary then $r_I^M = \dim M - 1$.
\end{theorem}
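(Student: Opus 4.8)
The plan is to use Theorem~\ref{main} to reduce the computation of $r_I^M$ to that of $r_I^M(\Xb)$ for a single convenient $\Xb$, and then to establish matching lower and upper bounds for that degree. Replacing $A$ by the faithfully flat local extension $A[t]_{\m A[t]}$---which has infinite residue field and alters neither $\dim M$, nor the lengths $\ell(M/I^nM)$, nor any of the polynomial degrees that occur---we may assume $A/\m$ is infinite. Choose a minimal reduction $\mathbf z=z_1,\dots ,z_d\in I$ of $I$ (here $d=\dim A$); then $\mathbf z$ is a system of parameters of $A$ contained in $I$, the ideal $(\mathbf z)$ is $\m$-primary, and in the associated graded ring $G:=\operatorname{gr}_I A=\bigoplus_{n\ge 0}I^n/I^{n+1}$ the initial forms $z_1^{*},\dots ,z_d^{*}\in G_1$ have the property---this is the defining feature of a minimal reduction, read off from the fiber cone---that $G/(z_1^{*},\dots ,z_d^{*})G$ has finite length. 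Put $\Xb:=K_\bullet(\mathbf z;A)$, the Koszul complex on $\mathbf z$: it is a bounded complex of free $A$-modules whose homology $H_\bullet(\mathbf z;A)$ has finite length (as $(\mathbf z)$ is $\m$-primary), and it is non-zero since $H_0(\mathbf z;A)=A/(\mathbf z)\neq 0$; thus $\Xb\in K^b_f(\proj A)$ is non-zero, and by Theorem~\ref{main}, $r_I^M=r_I^M(\Xb)$ is the degree of $\psi(n):=\sum_i\ell\big(H_i(\mathbf z;M/I^nM)\big)$.

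\emph{Lower bound.} As $z_i\in I$, we have $z_i\cdot I^{n-1}M\subseteq I^nM$, so $I^{n-1}M/I^nM$ is a submodule of $M/I^nM$ killed by $\mathbf z$; hence it is contained in $(0:_{M/I^nM}(\mathbf z))=H_d(\mathbf z;M/I^nM)$, giving $\psi(n)\ge\ell(I^{n-1}M/I^nM)$. For $n\gg 0$ the right-hand side is the Hilbert function of the graded $G$-module $\operatorname{gr}_I M$, hence a polynomial in $n$ of degree $\dim\operatorname{gr}_I M-1=\dim M-1$ with positive leading coefficient (using $\dim M\ge 1$). Therefore $\deg\psi\ge\dim M-1$.

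\emph{Upper bound.} Filter $K_\bullet(\mathbf z;M/I^nM)$ by $F^pK_i:=\big(I^p(M/I^nM)\big)^{\binom di}$; this is a finite filtration by subcomplexes, and since all entries of the Koszul differential lie in $(\mathbf z)\subseteq I$ the differential raises the filtration degree by at least $1$. Thus in the associated (convergent) spectral sequence the $E_0$-differential vanishes, $E_1=\operatorname{gr}^F K_\bullet(\mathbf z;M/I^nM)$, and the $E_1$-differential---the filtration-degree-raised-by-exactly-one part of the original differential---is precisely the Koszul differential of $\operatorname{gr}_I M$ with respect to $z_1^{*},\dots ,z_d^{*}$; consequently $E_2$ is, in every internal degree $\le n-1$, the corresponding graded piece of the Koszul homology $H_\bullet\big(K_\bullet(z_1^{*},\dots ,z_d^{*};\operatorname{gr}_I M)\big)$. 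Since $G/(z_1^{*},\dots ,z_d^{*})G$ has finite length, so does $\operatorname{gr}_I M/(z_1^{*},\dots ,z_d^{*})\operatorname{gr}_I M$, and hence so does every $H_i\big(K_\bullet(z_1^{*},\dots ,z_d^{*};\operatorname{gr}_I M)\big)$; in particular each of these is concentrated in finitely many degrees. Now convergence of the spectral sequence gives $\psi(n)\le\sum_{p,q}\ell(E_2^{p,q})$, and splitting by internal degree: the contribution of internal degrees $\le n-1$ is at most the constant $\sum_i\ell_G\big(H_i(K_\bullet(z_1^{*},\dots ,z_d^{*};\operatorname{gr}_I M))\big)$, while the contribution of internal degrees $\ge n$ is, using $\ell(E_2)\le\ell(E_1)$ and $F^pK_\bullet=0$ for $p\ge n$, at most $2^d\big(\ell(M/I^nM)-\ell(M/I^{n-d}M)\big)=2^d\sum_{p=n-d}^{n-1}\ell(I^pM/I^{p+1}M)$, a sum of $d$ values of a polynomial of degree $\dim M-1$ and hence of degree $\dim M-1$ in $n$. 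So $\psi(n)=O(n^{\dim M-1})$ and $\deg\psi\le\dim M-1$.

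Combining the two estimates yields $r_I^M=\deg\psi=\dim M-1$. The delicate step is the upper bound: one must set up the $I$-adic filtration spectral sequence of the Koszul complex, recognize its $E_2$-page as the Koszul homology of $\operatorname{gr}_I M$ with respect to the $z_i^{*}$, and---this is precisely why one passes to an infinite residue field and takes $\mathbf z$ to be a minimal reduction of $I$---ensure that that Koszul homology has finite length, so that only the $O(d)$ ``boundary'' internal degrees near $n$ can drive the growth of $\psi(n)$, pinning its degree at $\dim M-1$.
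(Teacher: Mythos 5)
Your argument is correct in outline and reaches the right answer, but it follows a somewhat different path from the paper's. The paper first performs a chain of base changes (infinite residue field, completion, passage to a regular $Q\twoheadrightarrow A$, then dividing by a maximal regular sequence in $\ann M$) to reduce to the situation where $A$ is Cohen--Macaulay with $\dim A=\dim M$, and then uses the Koszul complex on a maximal $M\oplus A$-superficial sequence; you remain in $A$ (after only passing to an infinite residue field) and use the Koszul complex on a minimal reduction $\mathbf z$ of $I$, a system of parameters of $A$. For the lower bound your argument is actually slightly cleaner: the containment $I^{n-1}M/I^nM\subseteq H_d(\mathbf z;M/I^nM)$ needs only $z_i\in I$, whereas the paper invokes superficiality to get the colon-capture $(I^nM:x_1)\cap I^cM=I^{n-1}M$, of which only the trivial inclusion $\supseteq$ is used. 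Where the two really diverge is the upper bound. The paper simply cites \ref{deg-bound} (that is, \cite[Proposition 3]{T}): since $(\mathbf z)$ is $\m$-primary, $\dim H^*(\Kb\otimes M)=0$, and $\ell_M(I)=\dim M$, so $r_I^M(\Kb)\le\max\{0,\dim M-1\}=\dim M-1$. You instead re-derive this by filtering $K_\bullet(\mathbf z;M/I^nM)$ $I$-adically and identifying the $E_2$-page, away from the truncation boundary, with the finite-length Koszul homology of $\operatorname{gr}_IM$ with respect to the initial forms $z_i^*$; this is, in effect, a self-contained proof of the relevant special case of \cite[Proposition 3]{T}. That step is sound in substance, but the bookkeeping is mislabeled: you bound ``the contribution of internal degrees $\ge n$'' and justify it via $F^pK_\bullet=0$ for $p\ge n$---yet that vanishing says precisely that $E_2^{p,\ast}=0$ for $p\ge n$, so that contribution is zero. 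What your estimate $2^d\bigl(\ell(M/I^nM)-\ell(M/I^{n-d}M)\bigr)$ actually controls is the contribution of the finitely many filtration degrees $n-d\le p\le n-1$ near the truncation, where $E_2$ need not coincide with the stable Koszul homology of $\operatorname{gr}_IM$; with that relabelling the upper bound goes through. Since \ref{deg-bound} is already in the paper's toolkit (and is used in proving Theorem~\ref{main}), invoking it directly is the more economical choice, but your spectral-sequence derivation is a valid, if longer, alternative.
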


\s \label{reg-setup} Let $A$ be a \CM \ local ring. Let $I \neq A$ be an ideal of $A$ and let $M$ be a non-zero $A$-module. If $L$ is a non-zero module of finite length and finite projective dimension, set
\[
t^I_M(L, n) = \sum_{i = 0}^{ \dim A}\ell(\Tor^A_i(L, M/I^nM)) \ \text{and}\  e^I_M(L, n) = \sum_{i = 0}^{ \dim A}\ell(\Ext_A^i(L, M/I^nM)).
\]
Also let $t^I_M(L)$ and $e^I_M(L)$ denote the degree of the corresponding functions of polynomial type. We show
\begin{corollary}\label{reg-cor}
(with hypotheses as in \ref{reg-setup}) Let $L_1, L_2$ be two non-zero modules of finite length and finite projective dimension. Then
\[
t^I_M(L_1) = t^I_M(L_2) = e^I_M(L_1) = e^I_M(L_2).
\]
\end{corollary}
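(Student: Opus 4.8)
The plan is to realise both $t^I_M(L,n)$ and $e^I_M(L,n)$, for a non-zero module $L$ of finite length and finite projective dimension, as instances of the function $\psi^{M,I}_{\Xb}$ of \ref{setup} attached to suitable \emph{non-zero} complexes in $K^b_f(\proj A)$, and then to quote Theorem \ref{main}. Since the degree produced by Theorem \ref{main} is the constant $r^M_I$ irrespective of the complex chosen, all four numbers in the statement will equal $r^M_I$, and the corollary follows at once.

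First I would treat the $\Tor$ side. Let $\mathbf{P}_\bullet$ be a minimal free resolution of $L$. Since $A$ is \CM\ and $L \neq 0$ has depth $0$, the Auslander--Buchsbaum formula gives $\projdim L = \depth A = \dim A$, so $\mathbf{P}_\bullet$ is a bounded complex of finitely generated free modules, which I view as an object of $K^b(\proj A)$ placed in cohomological degrees $-\dim A, \dots, 0$ with $H^0(\mathbf{P}_\bullet) = L$ and all other cohomology zero. Thus $\mathbf{P}_\bullet$ is a non-zero object of $K^b_f(\proj A)$. Because $\Tor^A_i(L, M/I^nM) = H^{-i}(\mathbf{P}_\bullet \otimes_A M/I^nM)$ vanishes outside $0 \le i \le \dim A$, we get $t^I_M(L,n) = \psi^{M,I}_{\mathbf{P}_\bullet}(n)$ for every $n \geq 1$. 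The hypotheses $M \neq 0$ and $I \neq A$ of Theorem \ref{main} hold by \ref{reg-setup}, whence $t^I_M(L) = r^M_I(\mathbf{P}_\bullet) = r^M_I$.

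Next I would treat the $\Ext$ side by passing to the dual complex $\mathbf{P}^\vee_\bullet = \Hom_A(\mathbf{P}_\bullet, A)$, again a bounded complex of finitely generated free modules. As each term of $\mathbf{P}_\bullet$ is finitely generated projective there is a natural isomorphism $\Hom_A(\mathbf{P}_\bullet, M/I^nM) \cong \mathbf{P}^\vee_\bullet \otimes_A M/I^nM$, so $\Ext^i_A(L, M/I^nM) = H^i(\mathbf{P}^\vee_\bullet \otimes_A M/I^nM)$, and summing over $0 \le i \le \projdim L = \dim A$ (outside which $\Ext^i_A(L,-)$ vanishes) gives $e^I_M(L,n) = \psi^{M,I}_{\mathbf{P}^\vee_\bullet}(n)$. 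It remains to see that $\mathbf{P}^\vee_\bullet$ is a non-zero object of $K^b_f(\proj A)$. Its cohomology is $H^i(\mathbf{P}^\vee_\bullet) = \Ext^i_A(L,A)$, which is finitely generated and killed by a power of $\m$ (as $\sqrt{\ann L} = \m$), hence of finite length; and $\mathbf{P}^\vee_\bullet$ is not homotopy-trivial because $\Hom_A(-,A)$ is a self-inverse duality on $K^b(\proj A)$, so $\mathbf{P}^\vee_\bullet \simeq 0$ would force $\mathbf{P}_\bullet \simeq 0$, which is false. (Equivalently, $L$ is a perfect module of grade $\dim A$, so $\Ext^{\dim A}_A(L,A) \neq 0$.) Theorem \ref{main} then yields $e^I_M(L) = r^M_I(\mathbf{P}^\vee_\bullet) = r^M_I$.

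Applying the previous two paragraphs to $L_1$ and $L_2$ gives $t^I_M(L_1) = t^I_M(L_2) = e^I_M(L_1) = e^I_M(L_2) = r^M_I$. I expect the only real subtlety to lie in the $\Ext$ case: one must pass to the dual complex, check that its cohomology is still of finite length, and --- the crucial point --- verify that it is non-zero in $K^b(\proj A)$ so that Theorem \ref{main} applies; this is exactly where the self-duality of $\Hom_A(-,A)$ on $K^b(\proj A)$ (equivalently, the non-vanishing of the top $\Ext$ of a perfect finite-length module over a \CM\ ring) is used. Everything else is a routine bookkeeping of lengths of cohomology against the summands defining $t^I_M$ and $e^I_M$, together with the finiteness of $\projdim L$.
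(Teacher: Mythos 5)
Your proposal is correct and follows essentially the same route as the paper: realize $t^I_M(L,n)$ as $\psi^{M,I}$ of a minimal projective resolution and $e^I_M(L,n)$ as $\psi^{M,I}$ of its $A$-dual, then invoke Theorem \ref{main} to see that both degrees equal the universal constant $r^M_I$. You simply spell out a few details the paper leaves implicit (the Auslander--Buchsbaum bound on $\projdim L$, the finite-length cohomology and non-vanishing of the dual complex), all of which are correct.
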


\s\label{m-prim-setup}  We now consider the case when $\dim M > 0$ and $I$ is $\m$-primary. Then by \cite[Proposition 3]{T} it follows that $\psi_{\Xb}^{M, I}(n)$ is of degree
\[
s_I^M(\Xb)  \leq \max \{ \dim H^*(\Xb \otimes  M) , \dim M - 1 \}.
\]
Furthermore if $\dim H^*(\Xb\otimes M) \geq \dim M$ then $s_I^M(\Xb) = \dim H^*(\Xb \otimes  M)$.
We prove
\begin{theorem}\label{m-prim-bound}(with hypotheses as in \ref{m-prim-setup}) We have
\[
s_I^M(\Xb)  =  \max \{ \dim H^*(\Xb \otimes  M) , \dim M - 1 \}.
\]
\end{theorem}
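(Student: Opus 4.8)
The plan is to observe that, for objects of $K^b_f(\proj A)$, the maximum on the right-hand side collapses to $\dim M - 1$, and then to read off the value of $s_I^M(\Xb)$ from Theorems \ref{main} and \ref{max-dim}. Recall that $s_I^M(\Xb)$, being the degree of the polynomial function $\psi_{\Xb}^{M,I}$, is the same integer as the $r_I^M(\Xb)$ of \ref{setup}. By Theorem \ref{main} this integer is independent of the non-zero complex $\Xb$ and equals the constant $r_I^M$, and by Theorem \ref{max-dim} --- whose hypotheses, $\dim M>0$ and $I$ $\m$-primary, are exactly those of \ref{m-prim-setup} --- we have $r_I^M = \dim M - 1$. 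Thus $s_I^M(\Xb) = \dim M - 1$, and it remains only to check that $\max\{\dim H^*(\Xb\otimes M),\ \dim M-1\} = \dim M - 1$, i.e.\ that $\dim H^*(\Xb\otimes M)\le \dim M - 1$.

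For this I would prove the stronger statement that $H^*(\Xb\otimes M)=\bigoplus_i H^i(\Xb\otimes M)$ is a non-zero $A$-module of finite length, so that $\dim H^*(\Xb\otimes M)=0\le\dim M-1$ (using $\dim M\ge1$). Finiteness of length: for any prime $\mathfrak{p}\ne\m$, each $H^i(\Xb)$ has finite length and is therefore supported only at $\m$, so $\Xb_{\mathfrak{p}}$ is an acyclic bounded complex of free $A_{\mathfrak{p}}$-modules, hence contractible; applying $-\otimes_{A_{\mathfrak{p}}}M_{\mathfrak{p}}$ preserves contractibility, and since $\Xb_{\mathfrak{p}}\otimes_{A_{\mathfrak{p}}}M_{\mathfrak{p}}=(\Xb\otimes_A M)_{\mathfrak{p}}$ we obtain $H^i(\Xb\otimes M)_{\mathfrak{p}}=0$ for all $i$. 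Hence $\Supp_A H^i(\Xb\otimes M)\subseteq\{\m\}$, and as $H^i(\Xb\otimes M)$ is finitely generated it has finite length. Non-vanishing: replace $\Xb$ by a minimal representative (all differentials with entries in $\m$); if $\Xb^t$ is its top non-zero term, then $H^t(\Xb\otimes M)$ is the cokernel of the map $\Xb^{t-1}\otimes M\to\Xb^t\otimes M$, which has image inside $\m(\Xb^t\otimes M)$, and $\Xb^t\otimes M\ne0$ because $M\ne0$; hence this cokernel is non-zero by Nakayama. This settles the claim modulo Theorems \ref{main} and \ref{max-dim}.

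Given those two theorems, the argument above presents no real obstacle; the only point that takes a moment is the localization computation just sketched (and in fact the inequality $s_I^M(\Xb)\le\dim M-1$ can also be obtained directly from the general bound of \cite[Proposition 3]{T} quoted in \ref{m-prim-setup}, once the maximum has been evaluated). The genuine difficulty --- the matching lower bound $s_I^M(\Xb)\ge\dim M-1$ --- is exactly what Theorem \ref{max-dim} provides: by Theorem \ref{main} one may verify it on a single convenient non-zero complex, and a natural candidate is the Koszul complex $K_\bullet(\underline{a})$ on a system of parameters $\underline{a}=a_1,\dots,a_d$ of $A$, which always lies in $K^b_f(\proj A)$ since $(\underline{a})$ is $\m$-primary; its associated function is $n\mapsto\sum_i\ell(H_i(\underline{a};M/I^nM))$, and the heart of the matter would be to extract from it the exact degree $\dim M-1$ rather than the naive upper bound $\dim M$, where the vanishing of the Euler characteristic $\chi(\underline{a};M/I^nM)=0$ (valid since $M/I^nM$ has dimension $0<d$) is expected to be useful.
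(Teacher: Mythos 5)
Your proposal proves a strictly smaller statement than Theorem \ref{m-prim-bound} actually makes. You read the hypotheses as saying $\Xb \in K^b_f(\proj A)$, show that $H^*(\Xb \otimes M)$ then has finite length (correct, and already recorded in \ref{fl}), so that the maximum collapses to $\dim M - 1$, and conclude by quoting Theorems \ref{main} and \ref{max-dim}. But in \ref{m-prim-setup} the complex $\Xb$ is an arbitrary non-zero bounded complex of finitely generated projectives: since $I$ is $\m$-primary, the finiteness of $\ell(H^i(\Xb \otimes M/I^nM))$ needed in \ref{deg-bound} is automatic, and no finite-length condition on $H^*(\Xb)$ is imposed. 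This generality is forced by how the theorem is used: Corollary \ref{cm-max} applies it to a minimal projective resolution $\Xb$ of a module $L$ of finite projective dimension but arbitrary dimension, where $\dim H^*(\Xb \otimes M) = \dim (M\otimes L)$ can be positive; it is also why \ref{m-prim-setup} bothers to record the case $\dim H^*(\Xb \otimes M) \geq \dim M$, which is vacuous on your reading. Under your interpretation the theorem is just a restatement of Theorem \ref{max-dim} and the corollary does not follow from it.

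For general $\Xb$ your argument breaks at the first step: Theorem \ref{main} is only about $K^b_f(\proj A)$, so you cannot identify $s_I^M(\Xb)$ with the constant $r_I^M$, and $\dim H^*(\Xb\otimes M)$ need not be $0$. The genuinely hard case, entirely missing from your proposal, is $0 \leq \dim H^*(\Xb \otimes M) \leq \dim M - 1$ with $\dim H^*(\Xb) > 0$, where one must still produce the lower bound $s_I^M(\Xb) \geq \dim M - 1$. The paper handles it by a double induction: first, when $\dim H^*(\Xb\otimes M) = 0$, induct on $\dim H^*(\Xb)$ by choosing $x \in \m$ so that multiplication by $x$ on $H^*(\Xb)$ has finite-length kernel and drops the dimension by one, and pass to $\Yb \cong \cone(x, \Xb)$; the termwise split exact sequence $0 \rt \Xb \otimes M/I^nM \rt \cone(x,\Xb)\otimes M/I^nM \rt \Xb[1]\otimes M/I^nM \rt 0$ yields $\psi_{\Yb}^{M,I}(n) \leq 2\,\psi_{\Xb}^{M,I}(n)$, so the induction hypothesis $s_I^M(\Yb) = \dim M - 1$ forces $s_I^M(\Xb) \geq \dim M - 1$; a second induction on $\dim H^*(\Xb \otimes M)$ (choosing $x$ relative to $H^*(\Xb \otimes M)$) then settles the remaining cases. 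Your argument, as written, establishes only the base case of this induction, which is precisely Theorem \ref{max-dim}; the closing sketch about Koszul complexes and Euler characteristics re-derives that theorem rather than supplying the missing induction.
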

\s \label{CM-setup} Let $I \neq A$ be an $\m$-primary ideal of $A$ and let $M$ be a  $A$-module with $\dim M > 0$. If $L$ is a non-zero module of  finite projective dimension, set
\[
t^I_M(L, n) = \sum_{i = 0}^{ \dim A}\ell(\Tor^A_i(L, M/I^nM)) \ \text{and}\  e^I_M(L, n) = \sum_{i = 0}^{ \dim A}\ell(\Ext_A^i(L, M/I^nM)).
\]
Also let $t^I_M(L)$ and $e^I_M(L)$ denote the degree of the corresponding functions of polynomial type. As an application of Theorem \ref{m-prim-bound} we have
\begin{corollary}\label{cm-max}( with hypotheses as in \ref{CM-setup}) We have
\[
t^I_M(L) = e^I_M(L) =  \max \{ \dim M\otimes L , \dim M - 1 \}.
\]
\end{corollary}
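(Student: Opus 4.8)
The plan is to reduce the statement about the sum of Tor's and Ext's to the already-established Theorem \ref{m-prim-bound} by packaging $L$ into a suitable complex in $K^b_f(\proj A)$. First I would observe that since $L$ has finite projective dimension, a minimal projective resolution $\Xb$ of $L$ is a bounded complex of free $A$-modules; regarding it as cohomologically indexed (i.e. $\Xb^{-i} = P_i$), it has cohomology concentrated in degree $0$, equal to $L$. A priori $\Xb$ lies in $K^b(\proj A)$, but it lies in $K^b_f(\proj A)$ precisely when $L$ has finite length. Since here $L$ is only assumed to have finite projective dimension, I would not use $\Xb$ directly; instead I would note that $t^I_M(L,n)$ is, up to the choice of indexing, exactly $\psi_{\Xb}^{M,I}(n)$ when one uses the resolution to compute $\Tor^A_i(L, M/I^nM) = H^{-i}(\Xb \otimes M/I^nM)$. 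Dually, choosing a bounded complex $\Yb$ of finitely generated free modules quasi-isomorphic to $R\Hom_A(L,A)$ (possible since $L$ has finite projective dimension over the Cohen--Macaulay — in fact one only needs $\projdim L < \infty$ — ring $A$), one gets $\Ext^i_A(L, M/I^nM) = H^i(\Yb \otimes M/I^nM)$, so $e^I_M(L,n) = \psi_{\Yb}^{M,I}(n)$.

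Next I would identify the cohomology complexes. We have $H^*(\Xb \otimes M) = \bigoplus_i \Tor^A_i(L,M)$, whose support is $\Supp L \cap \Supp M = \Supp(L \otimes_A M)$, so $\dim H^*(\Xb \otimes M) = \dim(M \otimes L)$. For $\Yb$, the complex $\Yb \otimes_A M$ computes $\Ext^*_A(L,M)$ whenever $\projdim L < \infty$ (this is the standard identification $R\Hom_A(L,A)\otimes^{\mathbf L}_A M \simeq R\Hom_A(L,M)$ valid when $L$ is perfect), and $\Supp \Ext^*_A(L,M) = \Supp L \cap \Supp M$ as well, so again $\dim H^*(\Yb \otimes M) = \dim(M\otimes L)$. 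The one technical point is whether $\Xb, \Yb$ actually land in $K^b_f(\proj A)$ rather than merely $K^b(\proj A)$ — their cohomology need not have finite length. I would handle this by passing to a general hyperplane section: choose $\bx = x_1,\dots,x_s$ in $\m$ forming a system of parameters modulo $\ann(L\otimes M)$ in an appropriate sense, or more cleanly, tensor $\Xb$ (resp. $\Yb$) with the Koszul complex on a suitable sequence to cut cohomology down to finite length while controlling the effect on $\psi$. Alternatively, and more in the spirit of the earlier results, I would invoke Theorem \ref{main}/\ref{max-dim}: the quantity $r_I^M$ is independent of the non-zero complex, but here the relevant invariant is the \emph{finer} degree $s_I^M(\Xb)$ from \ref{m-prim-setup}, which genuinely depends on $\dim H^*(\Xb\otimes M)$; so the reduction must keep track of that dimension.

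So the clean route is: apply Theorem \ref{m-prim-bound} directly to $\Xb$ and to $\Yb$, \emph{after} arranging finite-length cohomology. Concretely, replace $L$ by $L/\bx L$ for a maximal $M$-regular-and-generic sequence — no: better, observe that it suffices to prove the corollary after killing a generic element, by a standard induction on $\dim(M\otimes L)$ comparing $\psi$ for $L$ and for $L/xL$ via the long exact sequence coming from $0 \to M/I^nM \xrightarrow{x} M/I^nM \to (M/xM)/I^n(M/xM) \to 0$ up to bounded torsion; the base case $\dim(M\otimes L) = 0$ is exactly $L$ of finite length supported with $\Xb \in K^b_f(\proj A)$, where Theorem \ref{m-prim-bound} gives $s_I^M(\Xb) = \max\{\dim(M\otimes L), \dim M - 1\} = \max\{0,\dim M - 1\} = \dim M - 1$, matching the claimed formula, and the inductive step raises both sides by exactly one when $\dim(M\otimes L) \geq 1$. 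The main obstacle I anticipate is precisely this bookkeeping in the induction: ensuring that the generic element $x$ can be chosen simultaneously $M$-regular (or regular on high powers) and superficial for $I$, and that the error terms in relating $\psi^{M,I}_{\Xb}$ to $\psi^{M/xM, I}_{\Xb}$ are of strictly smaller degree, so that the degree really drops by one and the maximum with $\dim M - 1$ is respected at each stage. Once that is in place, the equality $t^I_M(L) = e^I_M(L)$ is automatic since both equal $\max\{\dim(M\otimes L),\dim M-1\}$, and the agreement with Corollary \ref{reg-cor} in the Cohen--Macaulay finite-length case is the specialization $\dim(M\otimes L)=0$.
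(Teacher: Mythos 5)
Your opening moves coincide with the paper's proof: resolve $L$ by a minimal bounded free complex $\Xb$, note $t^I_M(L,n)=\psi_{\Xb}^{M,I}(n)$ and $e^I_M(L,n)=\psi_{\Xb^*}^{M,I}(n)$ for $\Xb^*=\Hom_A(\Xb,A)$, and identify $\dim H^*(\Xb\otimes M)=\dim (M\otimes L)$ and $\dim H^*(\Xb^*\otimes M)=\dim \Ext^*_A(L,M)=\dim(M\otimes L)$ by a support argument (the latter is Lemma \ref{proj-supp}; its nontrivial inclusion uses that $\Ext^{c}_{A_P}(L_P,M_P)\neq 0$ for $c=\projdim L_P$, by minimality and Nakayama). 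Where you diverge is in worrying that $\Xb,\Xb^*$ need not lie in $K^b_f(\proj A)$ and in proposing an extra reduction to fix this. The worry is a fair reading of how Theorem \ref{m-prim-bound} is phrased, but its proof manifestly covers any non-zero $\Xb\in K^b(\proj A)$ when $I$ is $\m$-primary: each $H^i(\Xb\otimes M/I^nM)$ automatically has finite length (the terms $\Xb^i\otimes M/I^nM$ already do), and the argument inducts on $\dim H^*(\Xb)$, i.e. explicitly handles complexes whose cohomology is not of finite length; only the base case (Theorem \ref{max-dim}, via Theorem \ref{main}) lives in $K^b_f(\proj A)$. With that observation the corollary follows immediately, which is exactly what the paper does.

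The reduction you substitute for this observation has a genuine gap. First, the base case is misidentified: $\dim(M\otimes L)=0$ says $\Supp M\cap \Supp L\subseteq\{\m\}$, not that $L$ has finite length (e.g. $M=A/(u)$, $L=A/(v)$ over $k[[u,v]]$), so even in your base case $\Xb\notin K^b_f(\proj A)$ and you cannot invoke Theorem \ref{m-prim-bound} under your own restrictive reading of it. Second, the inductive step is left as an acknowledged ``main obstacle'' and is problematic as sketched: if you replace $L$ by $L/xL$ you must keep the projective dimension finite, which forces $x$ to be $L$-regular and conflicts with choosing $x$ generically for $M\otimes L$; if instead (as your short exact sequence $0\rt M/I^nM \xrightarrow{x} M/I^nM \rt (M/xM)/I^n(M/xM)\rt 0$ suggests) you replace $M$ by $M/xM$, that sequence is not exact on the left ($M/I^nM$ has finite length), the ``bounded torsion'' error terms are never controlled, and $\dim M-1$ drops along with $\dim(M\otimes L)$, so the claimed bookkeeping (``both sides rise by exactly one'') is not what actually has to be checked. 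All of this work is precisely what the cone-over-multiplication-by-$x$ induction inside the proof of Theorem \ref{m-prim-bound} already accomplishes at the level of complexes; quoting that theorem in its proper generality removes the need for your reduction, whereas the reduction as written does not stand on its own.
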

As an application of this corollary  we get the result stated in the abstract, i.e.,
\begin{corollary}( with hypotheses as in \ref{CM-setup}) Assume $A$ is regular and $M = A$. We have
\[
t^I(L) = e^I(L) =  \max \{ \dim  L , \dim A - 1 \}.
\]
\end{corollary}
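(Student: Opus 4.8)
The plan is to derive this statement as the special case $M = A$ of Corollary~\ref{cm-max}, so the work is entirely in checking that the hypotheses of \ref{CM-setup} are met and in translating the notation. First I would recall that since $A$ is regular local of dimension $d \geq 1$, every non-zero finitely generated $A$-module $L$ has finite projective dimension (indeed $\projdim_A L \leq d$) by the Auslander--Buchsbaum--Serre theorem; thus $L$ is an admissible choice of module in \ref{CM-setup}. Taking $M = A$, which has $\dim M = d > 0$, and any $\m$-primary ideal $I \neq A$, Corollary~\ref{cm-max} applies and gives
\[
t^I_A(L) = e^I_A(L) = \max\{\dim(A \otimes_A L),\ \dim A - 1\}.
\]

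Next I would unwind the two pieces of notation. On the one hand, $A \otimes_A L \cong L$, so $\dim(A \otimes_A L) = \dim L$ and $\dim A - 1 = d - 1$, which already identifies the right-hand side with $\max\{\dim L, d-1\}$. On the other hand, with $M = A$ we have $M/I^nM = A/I^n$ for all $n$, so that
\[
t^I_A(L, n) = \sum_{i=0}^{d}\ell(\Tor^A_i(L, A/I^n)) = t^I(L, n), \qquad e^I_A(L, n) = \sum_{i=0}^{d}\ell(\Ext^i_A(L, A/I^n)) = e^I(L, n),
\]
matching the functions of the abstract; hence $t^I_A(L) = t^I(L)$ and $e^I_A(L) = e^I(L)$ as degrees of functions of polynomial type. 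Combining the two observations yields $t^I(L) = e^I(L) = \max\{\dim L, d-1\}$, as claimed.

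Since this is a pure specialization, there is essentially no obstacle once Corollary~\ref{cm-max} is in hand; the only point requiring a moment's care is that $\sum_{i=0}^{d}$ really does capture every nonvanishing $\Tor$ and $\Ext$ module, which is exactly where regularity of $A$ (giving $\projdim_A L \leq d$) is used, and that the finite-length hypothesis on $L$ present in \ref{reg-setup} and Corollary~\ref{reg-cor} is \emph{not} needed here because \ref{CM-setup} and Corollary~\ref{cm-max} already permit an arbitrary non-zero $L$ of finite projective dimension. It may also be worth remarking explicitly that when $L$ has finite length the formula collapses to $t^I(L) = e^I(L) = d - 1$, which recovers and uniformizes the partial results cited in the introduction.
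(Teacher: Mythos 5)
Your proposal is correct and is essentially the paper's own argument: the paper obtains this corollary precisely by specializing Corollary~\ref{cm-max} to $M = A$, using regularity of $A$ so that every non-zero finitely generated $L$ has finite projective dimension and the sums $\sum_{i=0}^{d}$ capture all nonvanishing $\Tor$ and $\Ext$ modules. Your unwinding of the notation ($A \otimes_A L \cong L$, $M/I^nM = A/I^n$) fills in the routine details the paper leaves implicit.
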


We now describe in brief the contents of this paper. In section two we discuss a few preliminary results. In section three we prove Theorem \ref{main} and Corollary \ref{reg-cor}. In section four we give a proof of Theorem \ref{max-dim}. In section five we give a proof of Theorem \ref{m-prim-bound}. Finally in section six we give a proof of Corollary \ref{cm-max}.
\section{Preliminaries}
In this section we discuss a few preliminary results that we need.
We use \cite{N} for notation on triangulated categories. However we will assume that if $\mathcal{C}$ is a triangulated category then $\Hom_\mathcal{C}(X, Y)$ is a set for any objects $X, Y$ of $\mathcal{C}$.

\s \label{t-f} Let $\C$ be an essentially small triangulated category  with shift operator $\Sigma$ and let $\I(\C)$ be the set of isomorphism classes of objects in $\C$. By a \emph{weak triangle function} on $\C$ we mean a function $\xi \colon \I(\C) \rt \Z$ such that
\begin{enumerate}
  \item $\xi(X) \geq 0$ for all $X \in \C$.
  \item $\xi(0) = 0$.
  \item $\xi(X \oplus Y) = \xi(X) + \xi(Y)$ for all $X, Y \in \C$.
  \item $\xi(\Sigma X ) = \xi(X)$ for all $X \in \C$.
  \item If $X \rt Y \rt Z \rt \Sigma X $ is a triangle in $\C$ then
   $\xi(Z) \leq \xi(X) + \xi(Y)$.
\end{enumerate}
\s Set $$\ker \xi = \{ X \mid \xi(X) = 0 \}.$$
The following result (essentially an observation) is a crucial ingredient in our proof of Theorem \ref{main}.
\begin{lemma}
\label{ker-lemma}(with hypotheses as above)
$\ker \xi $ is a thick subcategory of $\C$.
\end{lemma}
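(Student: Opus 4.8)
The plan is to verify directly that $\ker \xi$ satisfies the two defining properties of a thick subcategory: it is a triangulated (full) subcategory, and it is closed under direct summands. Throughout I will use that $\xi$ takes only non-negative values (property (1)), so that an inequality $\xi(Z) \leq \xi(X) + \xi(Y)$ forces $\xi(Z) = 0$ whenever both $\xi(X) = 0$ and $\xi(Y) = 0$.

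First I would check that $\ker \xi$ is a non-empty full subcategory closed under shifts and their inverses: by (2) the zero object lies in $\ker \xi$, and by (4) we have $\xi(\Sigma X) = \xi(X)$, so $X \in \ker \xi$ if and only if $\Sigma X \in \ker \xi$; applying this to $\Sigma^{-1}X$ gives closure under $\Sigma^{-1}$ as well. Next, the key step: if $X \to Y \to Z \to \Sigma X$ is a triangle with $X, Y \in \ker \xi$, then property (5) gives $\xi(Z) \leq \xi(X) + \xi(Y) = 0$, and combined with (1) we get $\xi(Z) = 0$, so $Z \in \ker \xi$. To see that $\ker \xi$ is closed under the "other" vertices of a triangle, I rotate: given a triangle with, say, $Y, Z \in \ker \xi$, I rotate it to $\Sigma^{-1}Z \to X \to Y \to Z$, observe $\Sigma^{-1}Z \in \ker \xi$ by the shift property, and apply the previous case to conclude $X \in \ker \xi$. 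Hence $\ker \xi$ is a triangulated subcategory.

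Finally I would check the thickness (direct-summand) condition. Suppose $X \oplus Y \in \ker \xi$. By property (3), $\xi(X) + \xi(Y) = \xi(X \oplus Y) = 0$, and since both summands are non-negative by (1), we get $\xi(X) = \xi(Y) = 0$, so both $X$ and $Y$ lie in $\ker \xi$. This establishes that $\ker \xi$ is thick.

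I do not anticipate a genuine obstacle here; the statement is, as the authors say, essentially an observation, and every step is forced by one of the five axioms for a weak triangle function together with non-negativity. The only mild subtlety is being careful about which vertex of a triangle one controls — this is handled cleanly by rotating the triangle and invoking the shift-invariance property (4), so that the single computation "$\xi(X) = \xi(Y) = 0 \implies \xi(Z) = 0$ for a triangle $X \to Y \to Z \to \Sigma X$" suffices in all positions.
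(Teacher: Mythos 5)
Your proposal is correct and follows essentially the same route as the paper: closure under the zero object and shifts from axioms (2) and (4), closure under the third vertex of a triangle from the subadditivity axiom (5) combined with non-negativity (1), and closure under direct summands from additivity (3) plus non-negativity. The only difference is that you spell out the rotation argument for the other vertices of a triangle (and the paper additionally notes closure under isomorphism), but these are routine details and the substance of the two arguments is identical.
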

\begin{proof}
  We have
  \begin{enumerate}
    \item $0 \in \ker \xi$.
    \item If $X \cong Y$ and $X \in \ker \xi$. Then note $\xi(Y) = \xi(X) = 0$. So $Y \in \ker \xi$.
    \item If $X \in \ker \xi$ then note $\xi(\Sigma X) = \xi(X) = 0$. So $\Sigma X \in \ker \xi$. Similarly $\Sigma^{-1} X \in \ker \xi$.
    \item If $X \rt Y \rt Z \rt \Sigma X$ is a triangle in $\C$ with $X, Y \in \ker \xi$. Then note
    \[
    0 \leq \xi(Z) \leq \xi(X) + \xi(Y) = 0 + 0 = 0.
    \]
    So $Z \in \ker \xi$.
    \item
    If $X \oplus Y \in \ker \xi$ then $\xi(X) + \xi(Y) = \xi(X \oplus Y) = 0$. As $\xi(X), \xi(Y)$ are non-negative it follows that $\xi(X) = \xi(Y) = 0$. Thus $X, Y \in \ker \xi$.
  \end{enumerate}
  It follows that $\ker \xi$ is a thick subcategory of $\C$.
\end{proof}

\s Let $A$ be a ring. Let $K^b(\proj A)$ be  the homotopy category of bounded complexes of projective complexes.
We index complexes cohomologically
$$\Xb \colon  \cdots \rt \Xb^{n-1} \rt \Xb^n \rt \Xb^{n+1} \rt \cdots.$$
We note that $\Xb = 0$  in $K^b(\proj A)$ if and only if $H^*(\Xb) = 0$. If $\Xb = 0$  in $K^b(\proj A)$ then note that $H^*(X\otimes N) = 0$ for any $A$-module $N$.

\s \label{fl} Let $K^b_f(\proj A)$ denote the homotopy category of bounded complexes of projective complexes with finite length cohomology. We note that if $\Xb \in K^b_f(\proj A)$ and $N$ is an $A$-module
then $H^*(\Xb \otimes N)$ also has finite length. To see this if $P$ is a prime ideal in $A$ with $P \neq \m$ then
\[
H^*(\Xb \otimes_A N)_P = H^*({\Xb}_P \otimes_{A_P} N_P)  = 0 \quad \text{as ${\Xb}_P = 0$ in $K^b(\proj A_P)$}.
\]

\begin{lemma}
  \label{non-zero} Let $\Xb \in K^b(\proj A)$ be non-zero. Let $N \neq 0$. Then $H^*(\Xb \otimes N) \neq 0$.
\end{lemma}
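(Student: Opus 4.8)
The plan is to reduce to a \emph{minimal} complex and then read off the top cohomology. Since $A$ is Noetherian local, every object of $K^b(\proj A)$ is isomorphic, in $K^b(\proj A)$, to a bounded complex $\Fb$ of finitely generated free $A$-modules whose differentials satisfy $\partial^i(\Fb^i)\sub \m \Fb^{i+1}$ for all $i$, and such a minimal $\Fb$ vanishes in $K^b(\proj A)$ if and only if $\Fb^i=0$ for all $i$ (this is the standard existence of minimal free models over a local ring). So first I would replace $\Xb$ by its minimal model $\Fb$; since $\Xb\neq 0$ in $K^b(\proj A)$, we have $\Fb\neq 0$, and hence there is a largest integer $t$ with $\Fb^t\neq 0$, say $\Fb^t\cong A^r$ with $r\geq 1$. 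As $H^i(\Xb\otimes N)\cong H^i(\Fb\otimes N)$ for every $i$, it suffices to prove $H^t(\Fb\otimes N)\neq 0$.

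Next I would compute this top cohomology directly. Because $\Fb^{t+1}=0$,
\[
H^t(\Fb\otimes N)=\operatorname{coker}\big(\Fb^{t-1}\otimes N \xrightarrow{\ \partial^{t-1}\otimes 1\ } \Fb^t\otimes N\big).
\]
Since $\partial^{t-1}(\Fb^{t-1})\sub \m\Fb^t$, for $x\in\Fb^{t-1}$ and $n\in N$ we may write $\partial^{t-1}(x)=\sum_j a_j f_j$ with $a_j\in\m$ and $f_j\in\Fb^t$, so $(\partial^{t-1}\otimes 1)(x\otimes n)=\sum_j a_j(f_j\otimes n)\in\m(\Fb^t\otimes N)$. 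Hence $\image(\partial^{t-1}\otimes 1)\sub \m(\Fb^t\otimes N)$, and therefore there is a surjection
\[
H^t(\Fb\otimes N)\twoheadrightarrow (\Fb^t\otimes N)/\m(\Fb^t\otimes N)\cong \Fb^t\otimes_A(N/\m N)\cong (N/\m N)^r.
\]

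Finally, since $N\neq 0$ is finitely generated, Nakayama's lemma gives $N/\m N\neq 0$, so $(N/\m N)^r\neq 0$ because $r\geq 1$; thus $H^t(\Xb\otimes N)\neq 0$ and in particular $H^*(\Xb\otimes N)\neq 0$. The only non-elementary input is the passage to the minimal model $\Fb$; the rest is the one-line image computation above together with Nakayama. It is worth recording that finite generation of $N$ is genuinely used here: over a discrete valuation ring with uniformizer $\pi$, the complex $0\rt A\xrightarrow{\pi}A\rt 0$ is nonzero in $K^b(\proj A)$ yet becomes acyclic after $\otimes_A K$ with the fraction field $K$.
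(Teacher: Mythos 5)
Your proof is correct and follows essentially the same route as the paper: pass to a minimal free model, isolate the top (respectively, after a shift, degree-zero) nonvanishing term, and use minimality together with right-exactness of tensor and Nakayama to see the top cohomology of $\Xb\otimes N$ is nonzero. Your explicit observation that $\image(\partial^{t-1}\otimes 1)\sub\m(\Fb^t\otimes N)$ just spells out what the paper compresses into ``$H^0(\Xb\otimes N)=E\otimes N\neq 0$,'' and your remark on the necessity of finite generation of $N$ is a nice touch but not a difference in method.
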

\begin{proof}
  We may assume $\Xb$ is a minimal complex. Furthermore (after a shift) we may assume that $\Xb^0 \neq 0$ and $\Xb^i = 0$ for $i \geq 1$.
  Let $H^0(\Xb) = E \neq 0$ since $\Xb$ is minimal. It is straight forward to check that $H^0(\Xb \otimes N) = E\otimes N \neq 0$. The result follows.
\end{proof}

\s \label{deg-bound} Suppose for an $A$-module $M$ and an ideal $I$ we have
$\ell(H^i(\Xb \otimes M/I^nM))$ has finite length for all $n \geq 1$ and for all $i \in \Z$. Consider the function
\[
\psi_{\Xb}^{M, I}(n) = \sum_{i \in \Z}\ell(H^i(\Xb \otimes M/I^nM)), \quad  \text{for $n \geq 1$}.
\]
By \cite[Proposition 3]{T}  we know that $\psi_{\Xb}^{M, I}(n)$ is of polynomial type say of degree $r_I^M(X)$ and
\[
r_I(M) \leq \max\{ \dim H^*(\Xb \otimes M), \ell_M(I) - 1 \},
\]
Furthermore if $\dim H^*(\Xb \otimes M) \geq \ell_M(I)$ then equality holds above. Here $\ell_M(I)$ denotes the analytic spread of $I$ with respect to $M$, i.e. the dimension of the module $R(I, M) \otimes_A k$ over the Rees algebra $R(I)$ of $I$  (where $R(I,M)  = \bigoplus_{n \geq 0}I^nM$ denotes the Rees module of $M$ with respect to $I$). If $I$ is $\m$-primary then note $\ell_I(M) = \dim M$.
\section{Proof of Theorem \ref{main} and Corollary \ref{reg-cor}}
In this section we give proofs of Theorem \ref{main} and Corollary \ref{reg-cor}. We first give
\begin{proof}[Proof of Theorem \ref{main}]
By \ref{non-zero} it follows that the function $\psi_{\Xb}^{M, I}(n) \neq 0$ for all $n \geq 1$. Thus $r_I^M(\Xb) \geq 0$ for all $\Xb \neq 0$.
Also by \ref{deg-bound},  $r_I^M(\Xb) \leq \dim A$ for any $\Xb \in K^b_f(\proj A)$.
Let
$$ c = \max \{ r_I^M(\Xb) \mid \Xb \neq 0 \}. $$
For $\Yb \in K^b(\proj A)_f$ define
$$\eta(\Yb) = \lim_{n \rt \infty} \frac{c!}{n^c} \psi_{\Yb}^{M, I}(n). $$
Clearly $\xi(\Yb) \in \Z_{\geq 0}$.
Furthermore if $\Yb \cong \Zb$ then clearly $\xi(\Yb) = \xi(\Zb)$. Thus we have a function $\xi \colon \I(K^b_f(\proj A)) \rt \Z$ where $\I(K^b_f(\proj A))$ denotes the set of isomorphism classes of objects in $K^b_f(\proj A)$.

Claim : $\eta$ is a weak triangle function on $K^b_f(\proj A)$.

Assume the claim for the time-being. By \ref{ker-lemma} $\ker \eta$ is a thick subcategory of $K^b_f(\proj A)$. Let $\Xb$ be such that $r_I^M(\Xb) = c$. Then $\eta(\Xb) > 0$. So
$\Xb \notin \ker \eta$. Thus $\ker \eta \neq K^b(\proj A)$. By \cite[Lemma 1.2]{NK} it follows that $\ker \eta = 0$. Thus $r_I^M(\Yb) = c$ for any $\Yb \neq 0$ in $K^b_f(\proj A)$.

It remains to prove the claim. The first four properties of definition in \ref{t-f} are trivial to verify. Let $\Xb \xrightarrow{f} \Yb \rt  \Zb \rt \Xb[1]$ be a triangle in $K^b_f(\proj A)$. Then $Z \cong \cone(f)$ and we have an exact sequence in $C^b(\proj A)$
$$ 0 \rt \Yb \rt \cone(f) \rt \Xb[1] \rt 0.$$
As $\Xb^i$ are free $A$-modules we have an exact sequence for all $n \geq 1$,
$$ 0 \rt \Yb \otimes M/I^nM \rt \cone(f)\otimes M/I^nM  \rt \Xb[1]\otimes M/I^nM  \rt 0. $$
Taking homology we have
\[
\psi_{\Zb}^{M, I}(n) \leq  \psi_{\Yb}^{M, I}(n)  +  \psi_{\Xb[1]}^{M, I}(n)
\]
for all $n \geq 1$. It follows that
\[
\eta(\Zb) \leq \eta(\Yb) + \eta(\Xb[1]) = \eta(\Yb) + \eta(\Xb).
\]
Thus $\eta$ is a weak triangle function on $K^b_f(\proj A)$.
\end{proof}
Next we give
\begin{proof}[Proof of Corollary \ref{reg-cor}]
By Theorem \ref{main} we have that there exists $c$ with $r_I^M(\Xb) = c$ for any non-zero $\Xb \in K^b_f(\proj A)$. Let $L$ be a non-zero finite length $A$-module with finite projective dimension. Let $\Yb$ be a minimal projective  resolution of $L$. Then $\Yb \in K^b_f(\proj A)$ and is non-zero. It follows that $r_I^M(\Yb) = c$. Observe that $r_I^M(\Yb) = t^I_M(L)$.
Set $\Yb^* = \Hom_A(\Yb, A)$. Note that $\Yb^* \in K^b_f(A)$ and is non-zero. Also observe
\[
\Ext^*_A(L, M/I^nM) = H^*(\Hom_A(\Yb, M/I^nM) \cong H^*( \Yb^*\otimes_A M/I^nM).
\]
Therefore
\[
e^I_M(L) = r_I^M(\Yb^*) = c.
\]
The result follows.
\end{proof}
\section{Proof of Theorem \ref{max-dim}}
In this section we assume $(A, \m)$ is local ring, $M$ is an $A$-module with $\dim M > 0$ and $I$ is an $\m$-primary ideal. In this section we give proof of Theorem \ref{max-dim}. We first discuss the invariant $r_I^M(A)$ under base change.

\s\label{bc} \emph{Base change:}\\
(1) We first consider a flat base change $A \rt B$ where $(B, \n)$ is local and $\n = \m B$. We claim that $r_I^M(A) = r_{IB}^{M\otimes_A B}(B)$.

 In this case we first observe that if $E$ is an $A$-module of finite length then $\ell_B(E\otimes_A B) = \ell_A(E)$. Also if $\Xb $ is a bounded complex of $A$-modules with finite length cohomology then $\Xb\otimes_A B$ is a bounded complex of $B$-modules with finite length cohomology and $\ell_B(H^*(\Xb\otimes B) = \ell_A(H^*(\Xb))$. If  $\Yb \in K^b_f(\proj A)$ then $\Yb \otimes_A B \in K^b_f(\proj B)$. Let $\Yb \in K^b_f(\proj A)$ be non-zero.
 Set
 \[
\psi_{\Yb, A}^{M, I}(n) = \sum_{i \in \Z}\ell_A(H^i(\Yb \otimes M/I^nM)), \quad  \text{for $n \geq 1$}.
\]
Then
\begin{align*}
  \psi_{\Yb\otimes_A B, B}^{M\otimes_A B, IB}(n) &= \sum_{i \in \Z}\ell_B(H^i(\Yb \otimes_A B \otimes_B (M/I^nM \otimes_A B))  \\
   &= \sum_{i \in \Z}\ell_B(H^i((\Yb \otimes_A  M/I^nM) \otimes_A B))  \\
   &= \psi_{\Yb, A}^{M, I}(n)
\end{align*}
It follows that degree of the function $\psi_{\Yb, A}^{M, I}(n)$ is equal to degree of $\psi_{\Yb\otimes_A B, B}^{M\otimes_A B, IB}(n)$. The result follows.

(2) If $(Q, \n) \rt (A, \m)$ is a surjective ring homomorphism and if $J$ is any $\n$-primary ideal in $Q$ with $JA = I$ then
$r_I^M(A) = r_J^M(Q)$. To see this, if  $\Yb \in K^b_f(\proj Q)$ then $\Yb \otimes_Q A \in K^b_f(\proj A)$. Let $\Yb \in K^b_f(\proj Q)$ be non-zero.
Set
 \[
\psi_{\Yb, Q}^{M, J}(n) = \sum_{i \in \Z}\ell_Q(H^i(\Yb \otimes_QM/J^nM), \quad  \text{for $n \geq 1$}.
\]
Then
\begin{align*}
  \psi_{\Yb\otimes_Q A, A}^{M, I}(n) &= \sum_{i \in \Z}\ell_A(H^i(\Yb \otimes_Q A \otimes_A M/I^nM )  \\
   &= \sum_{i \in \Z}\ell_Q(H^i((\Yb\otimes_Q  M/J^nM )  \\
   &= \psi_{\Yb, Q}^{M, J}(n)
\end{align*}
The result follows.

(3) If $\q \subseteq \ann_A M$ then note that $M$ can be considered as a $C = A/\q$-module. Set $J = (I+ \q/\q)$. Then $J$ is primary to the maximal ideal of $C$. Then
$r_I^M(A) = r_J^M(C)$. The proof of this is similar to (2).

We now give
\begin{proof}[Proof of Theorem \ref{max-dim}]
We first do the following base-changes:
\begin{enumerate}
  \item If the residue field of $A$ is finite then we set $B = A[X]_{\m A[X]}$ then $(B,\n)$ is a flat extension of $A$ with $\m B = \n$ and the residue field of $B$ is $k(X)$ is infinite. So we replace $M$ by $M \otimes_A B$ and $I$ by $IB$ (see \ref{bc}(1)).
  \item We then complete $A$ (see \ref{bc}(1)).
  \item By (1),(2) we assume $A$ is complete with an infinite residue field. Let $A$ be a quotient of a regular local ring $Q$. Then we can replace $A$ by $Q$, (see \ref{bc}(2)).
  \item By (3) we can assume $A$ is regular local with infinite residue field. We note $a = \grade(\ann M) = \height \ann M$. Choose $y_1, \ldots, y_a \in 
  \ann M $ an $A$-regular sequence. By \ref{bc}(3)
  we can replace $A$ with $A/(y_1,\ldots, y_a)$.
\end{enumerate}
Thus we can assume $A$ is Cohen-Macaulay with infinite residue field and $\dim A = \dim M  > 0$. Let $d = \dim A$ and let $\bx = x_1, \ldots, x_d$ be a maximal $M \oplus A$-superficial sequence with respect to $I$. Then as $\bx$ is an $A$-superficial sequence with respect to $I$ it is an $A$-regular sequence. Let $\Kb$ be the Koszul complex on $\bx$. Then $\Kb \in K^b_f(\proj A)$. We also note that as $x_1$ is $M$-superficial with respect to $I$ there exists $c$  and $n_0$ such that $(I^{n}M \colon x_1)\cap I^cM = I^{n-1}M$ for all $n \geq n_0$.

 Set
 \[
\psi_{\Kb, A}^{M, I}(n) = \sum_{i \in \Z}\ell_A(H^i(\Kb \otimes M/I^nM), \quad  \text{for $n \geq 1$}.
\]
and let $r$ be its degree. By \ref{deg-bound},  $r \leq d - 1$.
We note that
$$H^d(\Kb \otimes M/I^nM) = \frac{I^nM \colon \bx}{I^nM} \supseteq \frac{(I^nM \colon \bx)\cap I^cM}{I^nM}  = \frac{I^{n-1}M}{I^nM} \ (\text{for $n \geq n_0$}).$$
So $\psi_{\Kb, A}^{M, I}(n) \geq \ell(I^{n-1}M/I^nM)$ for all $n \geq n_0$. So $r \geq d -1$. Thus $r = d -1$.
By Theorem \ref{main} it follows that $r_I^M = r = d -1$.
\end{proof}
\section{Proof of Theorem \ref{m-prim-bound}}
In this section we give a proof of Theorem \ref{m-prim-bound}. We need the following well-known result.
Suppose $\dim E > 0$. Then there exists $x \in \m$ such that $(0 \colon_E x)$ has finite length and $\dim E/xE = \dim E - 1$.

We now give give
\begin{proof}[Proof of Theorem \ref{m-prim-bound}]
By \ref{deg-bound}
it suffices to consider the case when $\dim H^*(\Xb \otimes M) \leq \dim M -1$.

We first consider the case when $\dim H^*(\Xb \otimes M) = 0$. We prove the result by inducting on $\dim H^*(\Xb)$. If $\dim H^*(\Xb) = 0$ then the result follows from Theorem \ref{max-dim}. If $\dim H^*(\Xb) > 0$ then choose $x$ such that map $ H^*(\Xb) \xrightarrow{x} H^*(\Xb)$ has finite length kernel and $\dim H^*(\Xb)/xH^*(\Xb) = \dim H^*(\Xb) -1$.
Consider the triangle $\Xb \xrightarrow{x} \Xb \rt \Yb \rt \Xb[1]$.  By taking long exact sequence of homology it follows that $\dim H^*(\Yb)  = \dim H^*(\Xb) - 1$. Furthermore note $\Zb = \cone(x, \Xb)  \cong \Yb$. We have an exact sequence
\[
0 \rt \Xb \rt \Zb \rt \Xb[1] \rt 0.
\]
As $\Xb^i$ is free for all $i$ we have an exact sequence for all $n \geq 0$
\[
0 \rt \Xb\otimes M/I^nM \rt \Zb \otimes M/I^n M \rt \Xb[1]\otimes M/I^nM \rt 0,
\]
and
\[
0 \rt \Xb\otimes M \rt \Zb \otimes M \rt \Xb[1]\otimes M \rt 0.
\]

By considering later short exact sequence of complexes,  we get by looking at long exact sequence in homology that $\dim H^*(\Zb\otimes M)  = 0$.  So by induction hypothesis $s_I^M(\Yb) = \dim M -1$.
By considering all $n \geq 1$ and summing all $i$ we get
\[
\psi_{\Yb}^{M, I}(n) \leq 2 \psi_{\Xb}^{M, I}(n)
\]
It follows that $s_I^M(\Xb) \geq s_I^M(\Yb) = \dim M -1$. But $s_I^M(\Xb) \leq \dim M -1$. The result follows.

We now assume $ 0 < a = \dim H^*(\Xb \otimes M) \leq \dim M -1$ and the result is proved for complexes $\Zb$ with $\dim H^*(\Zb \otimes M) = a -1$. Choose $x$ such that map $ H^*(\Xb \otimes M) \xrightarrow{x} H^*(\Xb\otimes M)$ has finite length kernel and $\dim H^*(\Xb \otimes M)/xH^*(\Xb \otimes M) = \dim H^*(\Xb \otimes M) -1$. Consider the triangle $\Xb \xrightarrow{x} \Xb \rt \Yb \rt \Xb[1]$. Note $\Zb = \cone(x, \Xb)  \cong \Yb$. We have an exact sequence
\[
0 \rt \Xb \rt \Zb \rt \Xb[1] \rt 0.
\]
As $\Xb^i$ is free for all $i$ we have an exact sequence for all $n \geq 0$
\[
0 \rt \Xb\otimes M/I^nM \rt \Zb \otimes M/I^n M \rt \Xb[1]\otimes M/I^nM \rt 0,
\]
and
\[
0 \rt \Xb\otimes M \rt \Zb \otimes M\rt \Xb[1]\otimes M \rt 0.
\]

By considering the latter short exact sequence of complexes,  we get by looking at long exact sequence in homology that $\dim H^*(\Zb\otimes M)  = \dim H^*(\Xb\otimes M) - 1$.  So by induction hypothesis $s_I^M(\Yb) = \dim M -1$.
By considering all $n \geq 1$ and summing all $i$ we get
\[
\psi_{\Yb}^{M, I}(n) \leq 2 \psi_{\Xb}^{M, I}(n)
\]
It follows that $s_I^M(\Xb) \geq s_I^M(\Yb) = \dim M -1$. But $s_I^M(\Xb) \leq \dim M -1$. The result follows.
\end{proof}
\section{Proof of Corollary \ref{cm-max}}
In this section we give a proof of Corollary \ref{cm-max}. We need the following result:
\begin{lemma}\label{proj-supp} Let $A$ be a \CM \ local ring and let $L$ be a non-zero $A$-module of finite projective dimension.
Then
$$ \dim M\otimes L =  \dim \Ext^*_A(L, M). $$
\end{lemma}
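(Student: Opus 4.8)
The plan is to reduce the claimed equality of dimensions to the equality of supports
\[
\bigcup_{i \geq 0} \Supp \Ext^i_A(L, M) \;=\; \Supp L \cap \Supp M \;=\; \Supp(L \otimes_A M).
\]
Granting this, observe that since $\projdim_A L < \infty$ the module $\Ext^*_A(L,M) = \bigoplus_{i \geq 0}\Ext^i_A(L,M)$ is a finite direct sum of finitely generated modules, so $\dim \Ext^*_A(L,M)$ equals the dimension of the displayed union, which in turn equals $\dim(L\otimes_A M)$ (the second equality in the display being the standard fact that $\Supp(L\otimes_A M) = \Supp L \cap \Supp M$ for finitely generated modules). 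This yields the lemma.

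To establish the support equality I would work one prime at a time, using that $\Ext$ commutes with localization --- $L$ is finitely generated over the Noetherian ring $A$ --- so that $\Ext^i_A(L,M)_P \cong \Ext^i_{A_P}(L_P, M_P)$ for every prime $P$. The inclusion $\subseteq$ is immediate: if $P \notin \Supp L$ then $L_P = 0$, while if $P \notin \Supp M$ then $M_P = 0$, and in either case all the localized $\Ext$ modules vanish.

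For the reverse inclusion, fix $P \in \Supp L \cap \Supp M$, so that $L_P \neq 0$ is a finitely generated $A_P$-module of finite projective dimension $p := \projdim_{A_P} L_P$ and $M_P \neq 0$. Choose a minimal free resolution $F_\bullet \to L_P$ over $A_P$; it has length exactly $p$, so $F_p \neq 0$. If $p = 0$ then $L_P$ is free of positive rank, so $\Hom_{A_P}(L_P, M_P) \neq 0$ since $M_P \neq 0$. If $p \geq 1$, minimality forces the differential $F_p \to F_{p-1}$ to have all entries in $PA_P$, hence the map $\Hom_{A_P}(F_{p-1}, M_P) \to \Hom_{A_P}(F_p, M_P)$ has image contained in $PA_P \cdot \Hom_{A_P}(F_p, M_P)$; its cokernel $\Ext^p_{A_P}(L_P,M_P)$ therefore surjects onto the nonzero module $\Hom_{A_P}(F_p,M_P)/PA_P\Hom_{A_P}(F_p,M_P)$ and is in particular nonzero. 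Either way $P \in \Supp \Ext^p_A(L,M)$, which proves the reverse inclusion.

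The one genuinely substantive point is the non-vanishing of the top $\Ext$ module, handled by the minimal-resolution-and-Nakayama argument above; everything else is bookkeeping with supports, and I expect no real obstacle. Note that the Cohen-Macaulay hypothesis on $A$ is not actually used in this argument --- it is in force only because it is the standing assumption of \ref{CM-setup}; if one prefers, the exponent $p$ can also be rewritten via the Auslander-Buchsbaum formula as $\depth A_P - \depth L_P$, but this is not needed.
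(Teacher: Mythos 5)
Your proof is correct and follows essentially the same route as the paper: both reduce the statement to the support identity $\Supp \Ext^*_A(L,M) = \Supp(L\otimes M) = \Supp L \cap \Supp M$ and, for the reverse inclusion, localize at $P$ and use a minimal free resolution to see that $\Ext^{p}_{A_P}(L_P,M_P) \neq 0$ for $p = \projdim_{A_P} L_P$. Your write-up merely spells out the Nakayama step that the paper leaves implicit, and your observation that the Cohen--Macaulay hypothesis is not needed is also accurate.
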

\begin{proof}
  It is clear that $\Supp (M\otimes L) = \Supp M \cap \Supp L$.
  Thus it follows that $\Supp \Ext^*_A(L, M)  \subseteq \Supp (M \otimes L)$.
  Conversely let $P \in \Supp M \otimes L$. We localize at $P$. So it suffices to prove $\Ext^*(L, M) \neq 0$. By taking a minimal resolution of $L$ it clear that if $c = \projdim L$ then $\Ext^r_A(L,M) \neq 0$. The result follows.
\end{proof}
We now give
\begin{proof}[Proof of Corollary \ref{cm-max}]
Let $\Xb$ be a minimal projective resolution of $L$. Then\\
$t^I_M(L, n) = \ell(H^*(\Xb \otimes M/I^nM))$.  By \ref{m-prim-bound}  It follows that
$$t^I_M(L) =  \max \{ \dim H^*(\Xb \otimes M), \dim M - 1 \}.$$
The result follows as $\dim H^*(\Xb \otimes M) = \dim M \otimes L$.

Set $\Xb^* = \Hom_A(\Xb, A)$. Observe
\[
\Ext^*_A(L, M/I^nM) = H^*(\Hom_A(\Xb, M/I^nM) \cong H^*( \Xb^*\otimes_A M/I^nM).
\]
So
$$e^I_M(L) =  \max \{ \dim H^*(\Xb^* \otimes M), \dim M - 1 \}.$$
Notice $H^*(\Xb^* \otimes M) = \Ext^*_A(L, M)$. The results follows from Lemma \ref{proj-supp}.
\end{proof}


\begin{thebibliography} {99}








\bibitem {BH}  W. Bruns and J. Herzog,
Cohen-Macaulay Rings, revised edition,
Cambridge Studies in Advanced Mathematics, 39.
Cambridge University Press, 1998.

\bibitem{CKST}
A.~Crabbe, D.~Katz, J.~Striuli and E.~Theodorescu,
\emph{Hilbert-Samuel polynomials for the contravariant extension functor},
 Nagoya Math. J. 198, (2010), 1--22.

\bibitem{IP}
S.~Iyengar and T.~J.~Puthenpurakal,
\emph{Hilbert-Samuel functions of modules over Cohen-Macaulay rings},
Proc. Amer. Math. Soc., 135, (2007), 637--648.

\bibitem{KP}
G.~Kadu and T.~J.~Puthenpurakal,
\emph{Bass and Betti numbers of $A/I^n$},
J. Algebra, 573, (2021), 620--640.

\bibitem{KT}
D.~Katz  and  E.~Theodorescu,
\emph{On the degree of Hilbert polynomials associated to the torsion functor},
Proc. Amer. Math. Soc., 135, (2007), 3073--3082.

 \bibitem{NK}
A.~Neeman,
\emph{The chromatic tower for $D(R$}),
With an appendix by Marcel B\"{o}kstedt.
Topology 31 (1992), no. 3, 519--532.


 \bibitem{N}
 A.~Neeman,
 \emph{Triangulated categories},
  Annals of Mathematics Studies, 148. Princeton University Press, Princeton, NJ, 2001.

\bibitem{P}
T.~J.~Puthenpurakal,
\emph{Hilbert coefficients of a Cohen-Macaulay module},
J. Algebra,  264,  (2003), 82--97.

\bibitem{T}
E.~Theodorescu,
\emph{Derived functors and Hilbert polynomials},
Math. Proc. Cambridge Philos. Soc.132, (2002),  75- –88.





\end{thebibliography}
\end{document}